\documentclass[11pt]{amsart} \textwidth=14.5cm \oddsidemargin=1cm
\evensidemargin=1cm

\usepackage{amsmath,wasysym}
\usepackage{amsxtra}
\usepackage{amscd}
\usepackage{amsthm}
\usepackage{amsfonts}
\usepackage{amssymb}
\usepackage{eucal}
\usepackage{graphics, color}
\usepackage{mathrsfs}
\usepackage[usenames,dvipsnames]{xcolor}
\usepackage{tikz}
\usepackage{stmaryrd} 
\usepackage[pdfusetitle,hidelinks,bookmarksdepth=2]{hyperref}

\input prepictex
\input pictex
\input postpictex

\newtheorem{thm}{Theorem} [section]
\newtheorem{lem}[thm]{Lemma}
\newtheorem{cor}[thm]{Corollary}
\newtheorem{prop}[thm]{Proposition}

\theoremstyle{definition}
\newtheorem{definition}[thm]{Definition}
\newtheorem{example}[thm]{Example}

\theoremstyle{remark}
\newtheorem{rem}[thm]{Remark}

\numberwithin{equation}{section}

\newcommand{\thmref}[1]{Theorem~\ref{#1}}
\newcommand{\secref}[1]{Section~\ref{#1}}
\newcommand{\lemref}[1]{Lemma~\ref{#1}}

\newcommand{\corref}[1]{Corollary~\ref{#1}}

\newcommand{\defref}[1]{Definition~\ref{#1}}

\newcommand{\N}{\mathbb{N}}
\newcommand{\llb}{\llbracket}
\newcommand{\rrb}{\rrbracket}

\tikzstyle{elem} = [draw=none, fill=none]

\advance\headheight by 2pt


\begin{document}

\title[Chain Posets]{Chain Posets}
\author{Ian T.\ Johnson}
\address{Department of Mathematics, University of Virginia, Charlottesville, VA 22904, USA}
\email{ij6fd@virginia.edu}

\begin{abstract}
  A chain poset, by definition, consists of chains of ordered elements in a
  poset.  We study the chain posets associated to two posets: the Boolean
  algebra and the poset of isotropic flags.  We prove that, in both cases, the
  chain posets satisfy the strong Sperner property and are rank-log concave.
\end{abstract}


\maketitle


\section{Introduction}
\label{sec:intro}
Given a poset $(P, \leq)$, it is often useful to consider \emph{chains} in $P$,
or ordered collections of elements of $P$.  In this paper, we consider a poset
structure on the set of chains in a poset $P$, formulated in
\defref{def:kflags}.

While gradedness is preserved by this chain poset structure (that is, if $P$ is
a graded poset then so is $P[k]$ for all $k\in\N$), other properties like
rank-symmetry and rank-unimodality are not, in general. This paper focuses
primarily on the special cases of the well-known Boolean algebra $B_n$ and a
close relative, the poset of isotropic flags $I_n$, for which stronger results
can be proved regarding their chain posets. In particular, for all $k\in\N$,
$B_n[k]$ and $I_n[k]$ are both rank-log concave, which implies
rank-unimodality, and satisfy the strong Sperner property.

The key to proving these results is the expression of $B_n$ and $I_n$ as direct
products of simpler posets; for these simpler posets, it is almost trivial to
prove the desired properties. The chain poset structure is compatible with the
direct product; that is, for posets $P$ and $Q$ and $k\in\N$,
$(P\times Q)[k] \cong P[k]\times Q[k]$. Finally, a result of~\cite{Engel} on
the direct products of posets allows us to prove the desired properties of
$B_n[k]$ and $I_n[k]$.

In the following section, we will expand this brief sketch by proving the
claims made at each step and explaining their precise formulations in more
detail.

\vspace{.3cm}

\textbf{Acknowledgement.} This undergraduate research was mentored by Professor
Weiqiang Wang of the University of Virginia and was partially supported by his
NSF grants DMS-1405131 and DMS-1702254.  The author would like to thank his
mentor for his help in preparing this paper and for posing the questions which
it answers.

\section{Properties of chain posets}
\subsection{Basic properties}
We define the notion of \textit{chain posets} as follows.

\begin{definition}
  \label{def:kflags}
  Let $(P, \leq)$ be a poset. Given $k \in \N$, define the poset of $k$-chains
  \[
    P[k] = \{(x_1 \leq x_2 \leq \dotsb \leq x_k) \mid x_i \in P\ \forall i\}.
  \]
  The poset structure $(P[k], \leq_k)$ is defined by
  \[
    (x_1 \leq x_2 \leq \dotsb \leq x_k) \leq_k (x'_1 \leq x'_2 \leq \dotsb \leq x'_k) \Leftrightarrow x_i \leq x'_i\ \forall i.
  \]
\end{definition}

For a graded poset $P$ (we omit the ordering $\leq$ when it is clear from
context), we use the convention that the rank function $\rho$ satisfies
$\rho(x) = 0$ for all minimal $x \in P$ and $\rho(x) = \rho(y) + 1$ for all
$x,y\in P$ such that $x \gtrdot y$.

\begin{prop}
  \label{prop:graded}
  Let $P$ be a graded poset of rank $N$; that is, every maximal chain in $P$
  has $N$ elements. Then $P[k]$ is graded of rank $kN$. Furthermore, we have
  the following expression for the rank $\rho$ of an element in $P[k]$:
  \[
    \rho(x_1 \leq x_2 \leq \dotsb \leq x_k) = \sum_{i=1}^k \rho(x_i).
  \]
\end{prop}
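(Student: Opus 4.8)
The plan is to pin down the covering relations of $P[k]$ explicitly; everything else then follows by a short bookkeeping argument. Write $r(x_1 \le x_2 \le \dots \le x_k) = \sum_{i=1}^k \rho(x_i)$ for the proposed rank function. Recall that a finite poset is graded with rank function $r$ provided that $r$ vanishes on minimal elements, that $r$ increases by exactly $1$ across every covering relation, and that all of its maximal elements share a common $r$-value; under these conditions every maximal chain climbs from $0$ to that common top value in unit steps, so they all have the same length. I would therefore organize the proof around verifying these three facts, the middle one being where the real content lies.

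The heart of the argument is the claim that, in $P[k]$, one has $(x_1 \le \dots \le x_k) \lessdot (x'_1 \le \dots \le x'_k)$ if and only if there is a single index $j$ with $x_i = x'_i$ for all $i \ne j$ and $x_j \lessdot x'_j$ in $P$. The ``if'' direction is a squeeze argument: any $(w_1 \le \dots \le w_k)$ lying weakly between the two tuples must agree with both outside coordinate $j$ and satisfy $x_j \le w_j \le x'_j$, hence, since $x_j \lessdot x'_j$, must coincide with one of the two endpoints. For the ``only if'' direction, suppose $(x_i) \lessdot (x'_i)$ in $P[k]$ and let $j$ be the \emph{largest} index with $x_j \ne x'_j$, so that $x_j < x'_j$; extending the chain $x_j \le x'_j$ to a saturated chain of $P$ produces $z$ with $x_j \lessdot z \le x'_j$. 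Then $(x_1, \dots, x_{j-1}, z, x_{j+1}, \dots, x_k)$ is again a chain: the only inequality that is not immediate is $z \le x_{j+1}$ when $j < k$, and it holds because the maximality of $j$ forces $x_{j+1} = x'_{j+1} \ge x'_j \ge z$. This new tuple lies strictly above $(x_i)$ and weakly below $(x'_i)$, so the covering hypothesis forces it to equal $(x'_i)$; hence $x'_j = z$ and $x'_i = x_i$ for $i \ne j$, as claimed. I expect this ``only if'' direction to be the main obstacle, precisely because the differing coordinate must be chosen with care: taking the \emph{largest} such index is exactly what keeps the modified tuple admissible.

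With the covering relations in hand the rest is routine. Every cover changes $r$ by $\rho(x'_j) - \rho(x_j) = 1$, which is the key middle fact. A tuple is minimal in $P[k]$ exactly when no coordinate can be lowered; checking this condition coordinate by coordinate shows it happens only for the constant tuples $(b \le \dots \le b)$ with $b$ minimal in $P$, on which $r = k\rho(b) = 0$. Dually, the maximal elements of $P[k]$ are the constant tuples at maximal elements of $P$, and since $P$ is graded all such elements of $P$ have one and the same rank, namely the rank $N$ of $P$ in the numbering of the statement, so $r$ is constant equal to $kN$ on the maximal elements of $P[k]$. Assembling the three facts shows that $P[k]$ is graded of rank $kN$, with rank function $\rho(x_1 \le x_2 \le \dots \le x_k) = \sum_{i=1}^k \rho(x_i)$, as asserted.
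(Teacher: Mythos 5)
Your proof is correct and follows the same strategy as the paper's: characterize the covering relations of $P[k]$ as ``change exactly one coordinate by a cover in $P$,'' then read off the rank function. In fact your write-up is more complete than the paper's, which states the cover characterization as an ``if and only if'' but only justifies the easy direction (that such a one-coordinate change is a cover); your argument for the converse --- picking the \emph{largest} index $j$ with $x_j \neq x'_j$ so that the interpolated tuple with $x_j$ replaced by some $z$ satisfying $x_j \lessdot z \le x'_j$ remains a chain --- supplies exactly the step the paper leaves implicit, and is where the real content lies. Your identification of the minimal and maximal elements of $P[k]$ as constant tuples is also a correct (and slightly more explicit) substitute for the paper's closing induction on $\sum_i \rho(x_i)$.
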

\begin{proof}
  This follows from a simple observation regarding the elements covered by a
  particular chain: given $x = (x_1 \leq x_2 \leq \dotsb \leq x_k)$,
  $y = (y_1 \leq y_2 \leq \dotsb \leq y_k) \lessdot x$ if and only if
  $y_i = x_i$ for all $1 \leq i \leq k$ with exactly one exception $j$, and
  $y_j \lessdot x_j$. It is clear that we indeed have $y < x$, and by our
  condition that $y_j \lessdot x_j$ for exactly one $j$ we see that there can
  be no element $z$ such that $y < z < x$.

  Since $P$ is graded, this implies that if $y \lessdot x$, then
  \[
    \sum_{i=1}^k\rho(y_i) = \sum_{i=1}^k\rho(x_i) - 1.
  \]
  Thus, the desired statement follows by induction on $\sum_{i=1}^k\rho(x_i)$.
\end{proof}

It is natural to ask whether other common properties of posets, such as
rank-symmetry or rank-unimodality, carry over to their posets of $k$-chains in
a similar manner. In general, this is not the case, as shown by the following
counterexamples.

\begin{example}
  \label{ex:countersym}
  The following poset $P$ (left) is rank-symmetric, but the corresponding
  $P[2]$ (right) is not. In the Hasse diagram for $P[2]$, we abbreviate the
  chain $C \leq A$ by $CA$, and so on.
  \\

  \begin{minipage}{0.5\textwidth}
    \begin{center}
      \begin{tikzpicture}
        \node[elem] (A) at (-0.5, 1) {$A$};
        \node[elem] (B) at (0.5, 1) {$B$};

        \node[elem] (C) at (-1, 0) {$C$};
        \node[elem] (D) at (0, 0) {$D$};
        \node[elem] (E) at (1, 0) {$E$};

        \node[elem] (F) at (-0.5, -1) {$F$};
        \node[elem] (G) at (0.5, -1) {$G$};

        \path[draw]
        (A) edge node {} (C)
        edge node {} (D)
        (B) edge node {} (D)
        edge node {} (E)

        (C) edge node {} (F)
        (D) edge node {} (F)
        (E) edge node {} (G);
      \end{tikzpicture}
    \end{center}
  \end{minipage}
  \begin{minipage}{0.5\textwidth}
    \begin{center}
      \begin{tikzpicture}
        \node[elem] (AA) at (-1.5, 2) {$AA$};
        \node[elem] (BB) at (1.5, 2) {$BB$};

        \node[elem] (CA) at (-2, 1) {$CA$};
        \node[elem] (DA) at (-1, 1) {$DA$};
        \node[elem] (DB) at (1, 1) {$DB$};
        \node[elem] (EB) at (2, 1) {$EB$};

        \node[elem] (CC) at (-2.5, 0) {$CC$};
        \node[elem] (FA) at (-1.5, 0) {$FA$};
        \node[elem] (DD) at (-0.5, 0) {$DD$};
        \node[elem] (FB) at (0.5, 0) {$FB$};
        \node[elem] (GB) at (1.5, 0) {$GB$};
        \node[elem] (EE) at (2.5, 0) {$EE$};

        \node[elem] (FC) at (-2, -1) {$FC$};
        \node[elem] (FD) at (-0.5, -1) {$FD$};
        \node[elem] (GE) at (2, -1) {$GE$};

        \node[elem] (FF) at (-2.5/2, -2) {$FF$};
        \node[elem] (GG) at (2, -2) {$GG$};

        \path[draw]
        (AA) edge node {} (CA)
        edge node {} (DA)
        (BB) edge node {} (DB)
        edge node {} (EB)

        (CA) edge node {} (CC)
        edge node {} (FA)
        (DA) edge node {} (DD)
        edge node {} (FA)
        (DB) edge node {} (DD)
        edge node {} (FB)
        (EB) edge node {} (EE)
        edge node {} (GB)

        (CC) edge node {} (FC)
        (FA) edge node {} (FC)
        edge node {} (FD)
        (DD) edge node {} (FD)
        (FB) edge node {} (FD)
        (GB) edge node {} (GE)
        (EE) edge node {} (GE)

        (FC) edge node {} (FF)
        (FD) edge node {} (FF)
        (GE) edge node {} (GG);
      \end{tikzpicture}
    \end{center}
  \end{minipage}
\end{example}

\begin{example}
  \label{ex:counteruni}
  The following poset $P$ (left) is rank-unimodal, but the corresponding $P[2]$
  (right) is not. In the Hasse diagram for $P[2]$, we abbreviate the chain
  $C \leq A$ by $CA$, and so on.
  \\

  \begin{minipage}{0.5\textwidth}
    \begin{center}
      \begin{tikzpicture}
        \node[elem] (A) at (0, 3) {$A$};

        \node[elem] (B) at (-0.5, 2) {$B$};
        \node[elem] (C) at (0.5, 2) {$C$};

        \node[elem] (D) at (-0.5, 1) {$D$};
        \node[elem] (E) at (0.5, 1) {$E$};

        \node[elem] (F) at (0, 0) {$F$};

        \path[draw]
        (A) edge node {} (B)
        edge node {} (C)

        (B) edge node {} (D)
        (C) edge node {} (E)

        (D) edge node {} (F)
        (E) edge node {} (F);
      \end{tikzpicture}
    \end{center}
  \end{minipage}
  \begin{minipage}{0.5\textwidth}
    \begin{center}
      \begin{tikzpicture}
        \node[elem] (AA) at (0, 6) {$AA$};

        \node[elem] (BA) at (-1, 5) {$BA$};
        \node[elem] (CA) at (1, 5) {$CA$};

        \node[elem] (BB) at (-1.5, 4) {$BB$};
        \node[elem] (DA) at (-0.5, 4) {$DA$};
        \node[elem] (EA) at (0.5, 4) {$EA$};
        \node[elem] (CC) at (1.5, 4) {$CC$};

        \node[elem] (DB) at (-1, 3) {$DB$};
        \node[elem] (FA) at (0, 3) {$FA$};
        \node[elem] (EC) at (1, 3) {$EC$};

        \node[elem] (DD) at (-1.5, 2) {$DD$};
        \node[elem] (FB) at (-0.5, 2) {$FB$};
        \node[elem] (FC) at (0.5, 2) {$FC$};
        \node[elem] (EE) at (1.5, 2) {$EE$};

        \node[elem] (FD) at (-1, 1) {$FD$};
        \node[elem] (FE) at (1, 1) {$FE$};

        \node[elem] (FF) at (0, 0) {$FF$};

        \path[draw]
        (AA) edge node {} (BA)
        edge node {} (CA)

        (BA) edge node {} (BB)
        edge node {} (DA)
        (CA) edge node {} (EA)
        edge node {} (CC)

        (BB) edge node {} (DB)
        (DA) edge node {} (DB)
        edge node {} (FA)
        (EA) edge node {} (FA)
        edge node {} (EC)
        (CC) edge node {} (EC)

        (DB) edge node {} (DD)
        edge node {} (FB)
        (FA) edge node {} (FB)
        edge node {} (FC)
        (EC) edge node {} (FC)
        edge node {} (EE)

        (DD) edge node {} (FD)
        (FB) edge node {} (FD)
        (FC) edge node {} (FE)
        (EE) edge node {} (FE)

        (FD) edge node {} (FF)
        (FE) edge node {} (FF);
      \end{tikzpicture}
    \end{center}
  \end{minipage}
\end{example}

\subsection{Direct products}
Recall that, given two posets $(P, \leq_P)$ and $(Q, \leq_Q)$, their
\emph{direct product} is defined as $(P \times Q, \leq)$, where the set
$P \times Q$ is the ordinary Cartesian product of the sets $P$ and $Q$ and the
relation $\leq$ is defined by
\[
  (p_1, q_1) \leq (p_2, q_2) \Leftrightarrow
  (p_1 \leq_P p_2) \wedge (q_1 \leq_Q q_2).
\]

This direct product is similar in many ways to the definition of the chain
posets in~\secref{sec:intro}. Indeed, the definition of the ordering $\leq$ is
identical, motivating the following lemma and its corollary.

\begin{lem}
  Let $(P, \leq_P)$ and $(Q, \leq_Q)$ be posets. Then for any $k \in \N$,
  \[
    (P \times Q)[k] \cong P[k] \times Q[k].
  \]
\end{lem}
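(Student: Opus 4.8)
The plan is to exhibit an explicit bijection between the underlying sets of $(P\times Q)[k]$ and $P[k]\times Q[k]$ and then check that it is an isomorphism of posets, i.e.\ that it and its inverse are order-preserving. An element of $(P\times Q)[k]$ is a chain
\[
  \bigl((p_1,q_1)\leq(p_2,q_2)\leq\dotsb\leq(p_k,q_k)\bigr)
\]
in $P\times Q$; by the definition of the product order, this is the same data as a pair of chains $(p_1\leq_P p_2\leq_P\dotsb\leq_P p_k)$ in $P$ and $(q_1\leq_Q q_2\leq_Q\dotsb\leq_Q q_k)$ in $Q$. So the natural map $\varphi$ sending the chain above to the pair $\bigl((p_1\leq\dotsb\leq p_k),\,(q_1\leq\dotsb\leq q_k)\bigr)$ is well defined, and it is manifestly a bijection on sets, with the obvious inverse that interleaves a pair of chains of equal length back into a single chain of pairs.

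Next I would verify that $\varphi$ respects the orders. Take two elements $x=((p_i,q_i))_{i=1}^k$ and $x'=((p_i',q_i'))_{i=1}^k$ of $(P\times Q)[k]$. By \defref{def:kflags} applied to the poset $P\times Q$, we have $x\leq_k x'$ iff $(p_i,q_i)\leq(p_i',q_i')$ for all $i$, which by the definition of the product order means $p_i\leq_P p_i'$ and $q_i\leq_Q q_i'$ for all $i$. Splitting the universally quantified conjunction, this is equivalent to $\bigl(p_i\leq_P p_i'\ \forall i\bigr)\wedge\bigl(q_i\leq_Q q_i'\ \forall i\bigr)$, which by \defref{def:kflags} applied to $P$ and to $Q$ says exactly $(p_i)_i\leq_k(p_i')_i$ in $P[k]$ and $(q_i)_i\leq_k(q_i')_i$ in $Q[k]$; and by the definition of the product order on $P[k]\times Q[k]$ this is precisely $\varphi(x)\leq\varphi(x')$. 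Since every step in this chain of equivalences is an "if and only if," the same computation shows $\varphi^{-1}$ is order-preserving, so $\varphi$ is a poset isomorphism.

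There is no real obstacle here: the statement is essentially a bookkeeping identity, and the only thing to be careful about is making sure the definitions line up — in particular that the inverse map is well defined, which requires the two chains in a pair to have the same length $k$, and this holds because both $P[k]$ and $Q[k]$ consist of chains of exactly $k$ (weakly ordered) elements. The real content, which this lemma is setting up, is that it lets one transport the good behavior of chain posets through the direct-product decompositions of $B_n$ and $I_n$; the lemma itself is just the compatibility statement that makes that transport legitimate.
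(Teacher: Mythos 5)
Your proof is correct and takes essentially the same approach as the paper: both exhibit the natural correspondence between a chain of pairs and a pair of chains and observe that it is an order-preserving bijection in both directions. You simply spell out the well-definedness and the chain of equivalences in more detail than the paper does.
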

\begin{proof}
  We use the following correspondence between the elements of $(P \times Q)[k]$
  and $P[k] \times Q[k]$:
  \[
    (p_1, q_1) \leq (p_2, q_2) \leq \dotsb \leq (p_k, q_k) \leftrightarrow
    (p_1 \leq_P p_2 \leq_P \dotsb \leq_P p_k, q_1 \leq_Q q_2 \leq_Q \dotsb
    \leq_Q q_k).
  \]
  That this relation is bijective follows immediately, as does the required
  ``order-preservation'' property: if $e_1, e_2 \in (P \times Q)[k]$ and
  $e'_1, e'_2 \in P[k] \times Q[k]$ are the corresponding elements, then we
  have $e_1 \leq e_2 \Leftrightarrow e'_1 \leq e'_2$.
\end{proof}

\begin{cor}
  \label{cor:dirprod}
  Let $P$ be a poset and $k \in \N$. Then for any $n \in \N$, we have
  \[
    (P^n)[k] \cong {(P[k])}^n,
  \]
  where
  $P^n \equiv \underbrace{P \times P \times \dotsb \times P}_{\text{n times}}$.
\end{cor}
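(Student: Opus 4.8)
The plan is to prove the statement by induction on $n$, with the preceding lemma serving as the engine of the inductive step. For the base case $n = 1$, the claim $(P^1)[k] \cong (P[k])^1$ reduces to $P[k] \cong P[k]$, which holds trivially; if one prefers to start at $n = 0$, then $P^0$ is the one-element poset and both $(P^0)[k]$ and $(P[k])^0$ are the one-element poset, so the base case again holds.

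For the inductive step, assume $(P^n)[k] \cong (P[k])^n$ for a given $n \in \N$. Using the associativity (up to isomorphism) of the direct product, write $P^{n+1} \cong P^n \times P$, and apply the lemma with the two factors taken to be $P^n$ and $P$:
\[
  (P^{n+1})[k] \cong (P^n \times P)[k] \cong (P^n)[k] \times P[k].
\]
By the inductive hypothesis $(P^n)[k] \cong (P[k])^n$, and since poset isomorphism is preserved under forming direct products (an isomorphism $(P^n)[k] \to (P[k])^n$ together with the identity map on $P[k]$ induces an isomorphism of the respective products), we conclude
\[
  (P^n)[k] \times P[k] \cong (P[k])^n \times P[k] \cong (P[k])^{n+1}.
\]
Composing these isomorphisms yields $(P^{n+1})[k] \cong (P[k])^{n+1}$, which completes the induction.

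The only facts used beyond the lemma are two routine structural properties of the direct product of posets: that it is associative up to isomorphism, and that it respects isomorphism in each argument. Both follow immediately from the componentwise description of the order relation on a direct product, so I do not expect any genuine obstacle here — the mathematical content is entirely contained in the lemma, and the corollary is a formal consequence obtained by straightforward bookkeeping.
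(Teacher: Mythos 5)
Your proof is correct and matches the paper's intent: the paper leaves this corollary without an explicit proof precisely because it is the routine induction on $n$ via the preceding lemma that you have written out. Nothing is missing; you have simply made explicit the bookkeeping (associativity of the product and compatibility of isomorphism with products) that the paper takes for granted.
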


\cite[Chapters~4.5 and~4.6]{Engel} provide a very useful method for proving
that a given direct product is rank-unimodal and Sperner, provided that its
``factors'' satisfy certain properties.

The first of these properties is \emph{normality}: if $P$ is a graded poset and
$i \geq 0$, let $P_i \equiv \{x\in P \mid \rho(x) = i\}$. Additionally, for any
subset $A \subseteq P$, define $\nabla(A)$ to be the set of all elements of $P$
which cover some element of $A$. Then, following~\cite{Engel}, we say that $P$
is normal if
\[
  \frac{|A|}{|P_i|} \leq \frac{|\nabla(A)|}{|P_{i+1}|}
\]
for all $A \subseteq P_i$ and $i = 0, \dotsc, n-1$. By
\cite[Corollary~4.5.3]{Engel} normality implies the strong Sperner property:
for $k \in \N$, a graded poset $P$ has the \emph{$k$-Sperner property} if no
union of $k$ antichains of $P$ contains more elements than the union of the $k$
largest levels of $P$; $P$ has the \emph{strong Sperner property} if it has the
$k$-Sperner property for all $k \in \N$.

The second of these properties is \emph{rank-log concavity}: if $P$ is a graded
poset, we say that $P$ is \emph{rank-log concave} if the sequence
$|P_0|, |P_1|, \dotsc, |P_n|$ is log concave, that is, if
$|P_i|^2 \geq |P_{i-1}||P_{i+1}|$ for all $i = 1,2,\dotsc,n-1$. Noting that
$|P_i| > 0$ for all $0 \leq i \leq n$, we can
use~\cite[Proposition~5.11]{Stanley} to conclude that rank-log concavity
implies rank-unimodality.

\begin{thm}[{\cite[Theorem~4.6.2]{Engel}}]
  \label{thm:product}
  If $P$ and $Q$ are posets which are both normal and rank-log concave, then
  their direct product $P \times Q$ is also normal and rank-log concave.
\end{thm}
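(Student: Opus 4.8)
The plan is to prove the two conclusions separately, the rank-log concavity being comparatively soft and the normality needing the log concavity hypothesis in an essential way. Rank-log concavity comes down to Whitney numbers: since $(P\times Q)_k=\bigsqcup_{i+j=k}(P_i\times Q_j)$, the Whitney-number sequence of $P\times Q$ is the convolution of those of $P$ and $Q$, each of which is strictly positive on an initial interval of integers. It is classical that the convolution of two finite strictly positive log concave sequences is again strictly positive on an interval and log concave: extending a finite strictly positive sequence by zeros, log concavity is equivalent to its being a P\'olya frequency sequence of order $2$ --- every $2\times2$ minor of the associated bi-infinite Toeplitz matrix is nonnegative --- the Toeplitz matrix of a convolution is the product of the two Toeplitz matrices, and by the Cauchy--Binet formula a product of matrices with nonnegative $2\times2$ minors again has nonnegative $2\times2$ minors. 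This already gives that $P\times Q$ is rank-log concave, so the rest of the work is normality.

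For normality I would use the reformulation by \emph{normalized flows}. Write $W^P_i=|P_i|$, let $\mathcal C(P)$ be the set of maximal chains of $P$, and call $w\colon\mathcal C(P)\to\mathbb R_{\ge0}$ a normalized flow if $\sum_{c\ni x}w(c)=1/W^P_{\rho(x)}$ for every $x\in P$. Then $P$ is normal if and only if it admits a normalized flow. Indeed, given such a $w$, the numbers $f(x,y)=\sum\{w(c):c\in\mathcal C(P),\ x,y\in c\}$, defined for each covering relation $x\lessdot y$, satisfy $\sum_{y\gtrdot x}f(x,y)=1/W^P_{\rho(x)}$ and $\sum_{x\lessdot y}f(x,y)=1/W^P_{\rho(y)}$ (a maximal chain through $y$ meets rank $\rho(y)-1$ in exactly one point); summing the first identity over $x\in A\subseteq P_i$ and using that edges leaving $A$ land in $\nabla(A)$ yields $|A|/W^P_i\le|\nabla(A)|/W^P_{i+1}$. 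Conversely, for each $i$ the normality inequality is exactly the max-flow--min-cut condition for the existence of a nonnegative flow on the Hasse edges from $P_i$ to $P_{i+1}$ meeting supplies $1/W^P_i$ at each $x\in P_i$ and demands $1/W^P_{i+1}$ at each $y\in P_{i+1}$; reading these level-wise flows as the transition probabilities of a Markov chain started uniformly on $P_0$ keeps the distribution uniform on every rank, and the induced weights on maximal chains form a normalized flow.

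With this criterion, the goal is to build a normalized flow on $P\times Q$ from normalized flows $w_P,w_Q$ of the factors. A maximal chain of $P\times Q$ is a triple $(a,b,\sigma)$ with $a\in\mathcal C(P)$, $b\in\mathcal C(Q)$ and $\sigma$ a \emph{shuffle}: a monotone lattice path from $(0,0)$ to $(m,n)$ recording the order in which one advances along $a$ and along $b$, where $m,n$ are the ranks of $P,Q$. Setting $w(a,b,\sigma)=w_P(a)w_Q(b)h(\sigma)$ for a probability weight $h$ on shuffles, one computes $\sum_{c\ni(x,y)}w(c)=\bigl(W^P_iW^Q_j\bigr)^{-1}\sum_{\sigma\text{ through }(i,j)}h(\sigma)$ for $(x,y)\in P_i\times Q_j$, so $w$ is a normalized flow on $P\times Q$ precisely when $h$ is a probability distribution on lattice paths with prescribed vertex-marginals
\[
  \sum_{\sigma\text{ through }(i,j)}\!\!h(\sigma)=\frac{W^P_iW^Q_j}{S_{i+j}},\qquad S_k:=\sum_{i'+j'=k}W^P_{i'}W^Q_{j'}
\]
(which sum to $1$ on each diagonal $i+j=k$, as they must). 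Whether such a distribution exists decouples along diagonals: for each $k$ one routes the mass on diagonal $k$ to the mass on diagonal $k+1$ along the two edges leaving each grid point, which is possible exactly when the relevant prefix sums interlace, and the diagonal-to-diagonal routings then glue into a single path distribution by the Markov-chain argument again. Clearing denominators, the interlacing conditions read
\[
  S_k\,G^{(k+1)}_i\ \le\ S_{k+1}\,G^{(k)}_i\ \le\ S_k\,G^{(k+1)}_{i+1},\qquad G^{(k)}_i:=\sum_{i'\le i}W^P_{i'}W^Q_{k-i'}.
\]

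The heart of the matter --- and the point at which log concavity is indispensable, since normality alone is not preserved under direct products --- is the left inequality $S_{k+1}G^{(k)}_i\ge S_kG^{(k+1)}_i$; the right one follows from it by interchanging $P$ and $Q$ together with the complementary identity $G^{(k)}_i+\widetilde G^{(k)}_{k-1-i}=S_k$, where $\widetilde G$ is the analogous partial sum formed in the $Q$-index. To prove the left inequality I would expand $S_{k+1}G^{(k)}_i-S_kG^{(k+1)}_i$ as a double sum over $i'\le i$ and all $i''$, split off the part with $i''\le i$ --- which is antisymmetric under $i'\leftrightarrow i''$ and hence cancels --- and observe that each surviving term, indexed by $i'\le i<i''$, equals $W^P_{i'}W^P_{i''}$ times a $2\times2$ determinant in the $W^Q$'s whose two index pairs have the same sum, the pair coming from $S_{k+1}G^{(k)}_i$ being the inner one; this determinant is nonnegative because $(W^Q_j)$, being strictly positive and log concave, is a P\'olya frequency sequence of order $2$. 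I expect the fussiest part of writing this up to be the bookkeeping at the boundary of the staircase --- checking that the prefix-sum interlacing is both necessary and sufficient, and handling terms whose indices leave the support --- but all the real content sits in this last $\mathrm{PF}_2$ inequality.
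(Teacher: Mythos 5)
The paper does not actually prove this statement: it is imported verbatim from Engel as Theorem~4.6.2, so there is no internal proof to compare against. Your argument is, as far as I can tell, correct and essentially complete, and it is in substance a reconstruction of the standard proof of this product theorem (due to Harper and to Hsieh--Kleitman, and the one given in Engel, Sections~4.5--4.6): the equivalence of normality with the existence of a normalized flow, the decomposition of a maximal chain of $P\times Q$ into a pair of maximal chains of the factors plus a shuffle, the reduction of the existence of the shuffle distribution to diagonal-by-diagonal transportation problems on a staircase bipartite graph, and the closing $\mathrm{PF}_2$ inequality are exactly the ingredients of that proof. The details you defer do go through: the staircase graph between consecutive diagonals is a tree, so the level flow is uniquely determined by the marginals and its nonnegativity is precisely the prefix-sum interlacing you write down; the zero-extension of a strictly positive log-concave sequence has no internal zeros, hence really is $\mathrm{PF}_2$, which disposes of the boundary terms; the antisymmetrization yielding $S_{k+1}G^{(k)}_i\ge S_kG^{(k+1)}_i$ is correct; and the right-hand interlacing inequality does follow from the left-hand one via $G^{(k)}_i+\widetilde G^{(k)}_{k-1-i}=S_k$ with the roles of $P$ and $Q$ exchanged. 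The one hypothesis worth making explicit if this were written out in full is that every maximal chain of $P$ and of $Q$ must meet every rank (so that maximal chains of the product are genuinely shuffles of maximal chains of the factors, and so that the normalized-flow characterization applies); this is part of the standing gradedness convention in the paper and in Engel, but your sketch uses it silently in several places.
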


\subsection{The Boolean algebra}
\label{subsec:boolean}
We are now ready to introduce the Boolean algebra and investigate its
corresponding chain posets. Recall that the Boolean algebra $B_n$ is defined to
be the set of all subsets of $\{1, 2, \dotsc, n\}$, ordered by inclusion. In a
geometric setting, the chain posets $B_n[k]$ are related to the $k$-step flag
varieties over $F_1$.

\begin{lem}
  \label{lem:boolfact}
  The Boolean algebra $B_n \cong T_1^n$, where $T_1$ is the totally
  ordered set $\{0, 1\}$ (observe that $T_1 \cong B_1$).
\end{lem}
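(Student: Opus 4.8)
The plan is to exhibit an explicit isomorphism via characteristic vectors. First I would define a map $\varphi \colon B_n \to T_1^n$ sending a subset $S \subseteq \{1, 2, \dotsc, n\}$ to its characteristic vector $\varphi(S) = (a_1, a_2, \dotsc, a_n)$, where $a_i = 1$ if $i \in S$ and $a_i = 0$ otherwise. The inverse map is equally explicit: a tuple $(a_1, \dotsc, a_n) \in T_1^n$ is sent to the set $\{i \mid a_i = 1\}$, and a one-line check confirms that these two maps are mutually inverse, so $\varphi$ is a bijection of underlying sets.

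It then remains to check that $\varphi$ and $\varphi^{-1}$ are order-preserving, which amounts to the equivalence
\[
  S \subseteq T \iff a_i \leq b_i \text{ for all } i = 1, \dotsc, n,
\]
where $(a_i) = \varphi(S)$ and $(b_i) = \varphi(T)$. The right-hand side is, by the definition of the direct product applied inductively, precisely the order relation on $T_1^n$. This equivalence is itself immediate: $S \subseteq T$ says exactly that $i \in S$ implies $i \in T$, i.e.\ that $a_i = 1$ implies $b_i = 1$, which in the two-element chain $T_1 = \{0, 1\}$ is the same as $a_i \leq b_i$, for every $i$.

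I do not expect a genuine obstacle here; the statement is essentially a reformulation of the definition of $B_n$ in terms of indicator functions, and the only point deserving attention is to confirm that the componentwise order making $T_1^n$ an iterated direct product coincides with the pointwise condition on characteristic vectors, which is exactly the content of the displayed equivalence. The parenthetical remark that $T_1 \cong B_1$ is just the case $n = 1$ of the same correspondence.
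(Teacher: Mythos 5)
Your proof is correct and follows exactly the same route as the paper: the characteristic-vector (indicator) correspondence between subsets and $0$--$1$ tuples, with the observation that inclusion corresponds to the componentwise order. You spell out the order-preservation equivalence a bit more explicitly than the paper does, but the argument is identical in substance.
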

\begin{proof}
  We can give an interpretation of $B_n$ in terms of $T_1^n$ as follows. By
  definition, every element $S \in B_n$ is some subset of
  $\{1, 2, \dotsc, n\}$, while in $T_1^n$ the elements are $n$-tuples
  consisting of $0$s and $1$s. Thus, a natural correspondence is to use $1$ to
  indicate the presence of a certain natural number in $S$ and $0$ to indicate
  its absence. For example, the elements of $B_2$ correspond to $T_1^2$ as
  follows:
  \begin{align*}
    \{1, 2\} \leftrightarrow (1, 1) \\
    \{1\} \leftrightarrow (1, 0) \\
    \{2\} \leftrightarrow (0, 1) \\
    \varnothing \leftrightarrow (0, 0).
  \end{align*}
  The bijection and order-preservation properties follow immediately.
\end{proof}

In light of~\corref{cor:dirprod} and~\thmref{thm:product}, we can now reduce
the problem of $B_n[k]$ to that of $T_1[k]$, which is considerably simpler.

\begin{lem}
  \label{lem:totalchain}
  For any $k \in \N$, $T_1[k] \cong T_k$, where $T_k$ is the totally ordered
  set $\{0, 1, \dotsc, k\}$.
\end{lem}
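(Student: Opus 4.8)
We need to show $T_1[k] \cong T_k$.

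$T_1 = \{0, 1\}$ is a 2-element chain. $T_1[k]$ consists of chains $x_1 \leq x_2 \leq \dots \leq x_k$ where each $x_i \in \{0, 1\}$. Since $T_1$ is totally ordered, any such sequence is automatically a weakly increasing sequence of 0s and 1s. Such a sequence looks like $0, 0, \dots, 0, 1, 1, \dots, 1$ — it's determined by the number of 1s (equivalently, where the "jump" happens). The number of 1s can be $0, 1, \dots, k$, giving $k+1$ elements. So there's a bijection with $T_k = \{0, 1, \dots, k\}$ sending a chain to its number of 1s (or the sum $\sum x_i$, which by Proposition 2.3 is just the rank).

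The order: $(x_1 \leq \dots \leq x_k) \leq_k (x'_1 \leq \dots \leq x'_k)$ iff $x_i \leq x'_i$ for all $i$. For weakly-increasing 0-1 sequences, this holds iff the number of 1s in the first is $\leq$ the number of 1s in the second. And $T_k$ is a chain, so this is order-preserving.

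**Proof plan:**

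The plan is to exhibit an explicit order-isomorphism. The key observation is that since $T_1 = \{0,1\}$ is totally ordered, every element of $T_1[k]$ is a weakly increasing sequence of $0$s and $1$s; such a sequence is completely determined by the number of $1$s it contains, a value lying in $\{0, 1, \dots, k\}$.

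First I would define the map $\varphi \colon T_1[k] \to T_k$ by $\varphi(x_1 \leq \dots \leq x_k) = \sum_{i=1}^k x_i$; equivalently, by Proposition 2.3 this is just the rank function $\rho$ of $T_1[k]$, since $\rho(x_i) = x_i$ in $T_1$. Next I would check that $\varphi$ is a bijection: for each $j \in \{0, \dots, k\}$ the unique preimage is the chain $(0, \dots, 0, 1, \dots, 1)$ with $k - j$ zeros followed by $j$ ones, which is a valid element of $T_1[k]$ since it is weakly increasing, and distinct chains in $T_1[k]$ (being weakly increasing $0$-$1$ sequences) with the same number of $1$s must be equal.

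Finally I would verify order-preservation in both directions. If $(x_i) \leq_k (x'_i)$, then $x_i \leq x'_i$ for all $i$, so $\sum x_i \leq \sum x'_i$, i.e. $\varphi(x) \leq \varphi(x')$. Conversely, if $\varphi(x) \leq \varphi(x')$, write $x$ as $0$s followed by $1$s with the jump at position $k - \varphi(x) + 1$ and similarly for $x'$; since $\varphi(x) \leq \varphi(x')$ the jump in $x'$ occurs no later than in $x$, whence $x_i \leq x'_i$ for every $i$ and $(x_i) \leq_k (x'_i)$. This establishes $T_1[k] \cong T_k$. There is no real obstacle here — the only thing to be careful about is making explicit the normal-form description of elements of $T_1[k]$ as $0$-$1$ sequences with a single jump, since everything else follows immediately from totality of $T_1$.
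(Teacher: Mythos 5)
Your proof is correct and rests on the same underlying observation as the paper's: elements of $T_1[k]$ are weakly increasing $0$-$1$ sequences determined by their number of $1$s. The paper merely draws the Hasse diagram of $T_1[3]$ and asserts the general case "can be seen in the same way," so your explicit map $\varphi(x) = \sum_i x_i$ with the bijectivity and two-way order-preservation checks is a more complete write-up of the intended argument.
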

\begin{proof}
  The following is the Hasse diagram of $T_1[3]$, where e.g.~$011$ corresponds
  to the chain $0 \leq 1 \leq 1$:
  \begin{center}
    \begin{tikzpicture}
      \node[elem] (111) at (0, 3) {$111$};
      \node[elem] (011) at (0, 2) {$011$};
      \node[elem] (001) at (0, 1) {$001$};
      \node[elem] (000) at (0, 0) {$000$};

      \path[draw]
      (111) edge node {} (011)
      (011) edge node {} (001)
      (001) edge node {} (000);
    \end{tikzpicture}
  \end{center}
  This is clearly isomorphic to $T_3$; the general case can be seen in the same
  way.
\end{proof}

\begin{thm}
  \label{thm:boolean}
  The poset $B_n[k]$ is rank-log concave and strongly Sperner for $n, k \in \N$
  (in particular, it is rank-unimodal and Sperner).
\end{thm}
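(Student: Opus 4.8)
The plan is to assemble the machinery built up above. Combining \lemref{lem:boolfact} (which gives $B_n\cong T_1^n$), \corref{cor:dirprod} (applied with $P=T_1$), and \lemref{lem:totalchain}, together with the evident fact that $P\cong Q$ implies $P[k]\cong Q[k]$, yields a chain of isomorphisms
\[
  B_n[k] \;\cong\; (T_1^n)[k] \;\cong\; (T_1[k])^n \;\cong\; T_k^n .
\]
Thus it suffices to show that $T_k^n$ is normal and rank-log concave: the former implies the strong Sperner property by \cite[Corollary~4.5.3]{Engel}, and the latter implies rank-unimodality by \cite[Proposition~5.11]{Stanley}, the hypothesis of that proposition being met since every level of $T_k^n$ is nonempty.

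First I would verify the base case, namely that the single factor $T_k$ is both normal and rank-log concave. This is immediate: $T_k$ is graded of rank $k$ with exactly one element in each level, so $|(T_k)_i|=1$ for $0\le i\le k$; rank-log concavity is then the tautology $1^2\ge 1\cdot 1$, and for normality one notes that a subset $A\subseteq (T_k)_i$ is either empty (in which case both sides of the normality inequality vanish) or equal to $(T_k)_i$ (in which case $\nabla(A)=(T_k)_{i+1}$ and both ratios equal $1$).

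Then I would bootstrap from one factor to $n$ factors. Since \thmref{thm:product} is phrased for a product of two posets, I would argue by induction on $n$: $T_k^1=T_k$ is normal and rank-log concave by the previous paragraph, and if $T_k^m$ has these two properties then so does $T_k^m\times T_k\cong T_k^{m+1}$ by \thmref{thm:product}. Hence $T_k^n$ is normal and rank-log concave, and by the reduction above so is $B_n[k]$, which gives all of the claimed conclusions.

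I do not expect a genuine obstacle here; the substance of the argument has been packaged into the preceding lemmas and into the cited results of Engel and Stanley. The only points demanding care are (i) tracking \emph{normality} and not merely rank-log concavity through the reduction---it is normality that powers the strong Sperner conclusion, so one must check it for $T_k$ and use the fact that \thmref{thm:product} preserves it---and (ii) the routine induction needed to upgrade the two-factor form of \thmref{thm:product} to the $n$-fold product $T_k^n$.
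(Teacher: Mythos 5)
Your proposal is correct and follows essentially the same route as the paper: the same chain of isomorphisms $B_n[k]\cong (T_1^n)[k]\cong (T_1[k])^n\cong T_k^n$, followed by the observation that $T_k$ is normal and rank-log concave and an induction on the number of factors via \thmref{thm:product}. You simply spell out the base-case verification and the induction more explicitly than the paper, which dismisses both as trivial.
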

\begin{proof}
  Using \corref{cor:dirprod}, \lemref{lem:totalchain}, and
  \lemref{lem:boolfact}, we have
  \[
    B_n[k] \cong (T_1^n)[k] \cong {(T_1[k])}^n \cong T_k^n.
  \]
  It is trivial that $T_k$ is normal and rank-log concave; thus, by induction
  on \thmref{thm:product}, $T_k^n$ (and consequently $B_n[k]$) is normal and
  rank-log concave.
\end{proof}

\subsection{The poset of isotropic flags}
Closely related to the Boolean algebra explored above is the poset of isotropic
flags $I_n$, which we now define.

\begin{definition}
  Denote by $\llb n \rrb$ the set $\{1, 2, \dotsc, n\}$ and, likewise,
  $\llb n' \rrb = \{1', 2', \dotsc, n'\}$. The \emph{poset of isotropic
    flags} $I_n$ is the set of all subsets of $\llb n \rrb \sqcup \llb n' \rrb$
  which contain no pair $\{i, i'\}$ for any $i = 1, 2, \dotsc, n$, ordered
  by inclusion. In the simplest case, $I_1$ is shown below:
  \begin{center}
    \begin{tikzpicture}
      \node[elem] (1) at (-0.5, 1) {$1$};
      \node[elem] (1') at (0.5, 1) {$1'$};
      \node[elem] (0) at (0, 0) {$\varnothing$};

      \path[draw]
      (1) edge node {} (0)
      (1') edge node {} (0);
    \end{tikzpicture}
  \end{center}
  This construction has an analogue in a geometric setting which motivates the
  choice of the name ``isotropic flags''.
\end{definition}

As with $B_n$, there exists a ``factorization'' of $I_n$ as a direct product of
simpler parts.

\begin{lem}
  \label{lem:iffact}
  For $n \in \N$, we have $I_n \cong I_1^n$.
\end{lem}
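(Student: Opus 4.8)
The plan is to mimic the proof of \lemref{lem:boolfact}: exhibit an explicit bijection between $I_n$ and $I_1^n$ and check that it preserves the order relation in both directions. The key observation is that an element $S \in I_n$ is a subset of $\llb n \rrb \sqcup \llb n' \rrb$ containing no pair $\{i, i'\}$, so for each index $i \in \{1, \dotsc, n\}$ exactly one of three mutually exclusive situations occurs: $i \in S$, or $i' \in S$, or neither $i$ nor $i'$ lies in $S$. This trichotomy is precisely the data of an element of $I_1$ (identifying $1 \leftrightarrow i$, $1' \leftrightarrow i'$, $\varnothing \leftrightarrow$ ``neither''), so recording this choice for each $i$ defines a map $I_n \to I_1^n$.

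First I would define the map $\varphi : I_n \to I_1^n$ by $\varphi(S) = (s_1, \dotsc, s_n)$, where $s_i = \{1\}$ if $i \in S$, $s_i = \{1'\}$ if $i' \in S$, and $s_i = \varnothing$ otherwise; this is well-defined exactly because the ``no $\{i,i'\}$ pair'' condition rules out the fourth case. Next I would check injectivity and surjectivity: the inverse map sends a tuple $(s_1, \dotsc, s_n)$ to $\bigcup_{i} \iota_i(s_i)$, where $\iota_i$ relabels $1 \mapsto i$ and $1' \mapsto i'$, and one sees immediately that the resulting subset never contains a forbidden pair and that $\varphi$ and this map are mutually inverse. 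Finally, for order-preservation, note that $S \subseteq S'$ in $I_n$ holds if and only if for every $i$ we have $\{i,i'\} \cap S \subseteq \{i,i'\} \cap S'$, which translates under $\varphi$ precisely into $s_i \leq s_i'$ in $I_1$ for all $i$, i.e.\ $\varphi(S) \leq \varphi(S')$ in $I_1^n$.

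There is no real obstacle here; the argument is routine and entirely parallel to \lemref{lem:boolfact}, the only extra wrinkle being the three-valued (rather than two-valued) coordinate bookkeeping forced by the isotropy condition. If one wanted to be slicker, one could instead observe that $I_1$ is itself the poset of subsets of $\{1, 1'\}$ avoiding the single forbidden pair, and that forming such ``pair-avoiding'' subsets commutes with disjoint union of the underlying index sets; but the explicit componentwise bijection above is the cleanest to write down and verify.
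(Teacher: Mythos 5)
Your proposal is correct and follows essentially the same route as the paper: both define the componentwise correspondence sending $S \in I_n$ to the tuple recording, for each $i$, whether $i$, $i'$, or neither belongs to $S$, with the isotropy condition guaranteeing well-definedness. Your write-up is somewhat more explicit about the inverse map and the order-preservation check, but the underlying argument is identical.
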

\begin{proof}
  Similarly to \lemref{lem:boolfact}, there is a natural correspondence between
  the elements of $I_n$ and those of $I_1^n$: since we enforce the condition
  that there are no pairs $\{i, i'\}$ in any element of $I_n$, we use $1$ in
  position $i$ of a tuple in $I_1^n$ to denote the presence of $i$, $1'$ to
  denote the presence of $i'$, and $0$ to denote the absence of both. For
  example, the element $\{1, 3'\}\in I_3$ would correspond to the tuple
  $(1, 0, 1') \in I_1^3$. That this indeed gives an isomorphism is easily seen
  in the same way as it was in \lemref{lem:boolfact}.
\end{proof}

\begin{lem}
  \label{lem:ifprop}
  For $k \in \N$, $I_1[k]$ is normal and rank-log concave.
\end{lem}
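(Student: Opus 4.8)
The plan is to describe the poset $I_1[k]$ completely and then read both properties directly off its levels. Since the only strict relations in $I_1$ are $\varnothing < \{1\}$ and $\varnothing < \{1'\}$, with $\{1\}$ and $\{1'\}$ incomparable and both maximal, every $k$-chain $x_1 \leq \dotsb \leq x_k$ in $I_1$ consists of a (possibly empty) initial block of copies of $\varnothing$ followed by a block of copies of a single maximal element. Hence $I_1[k]$ has exactly $2k+1$ elements: the minimum $\hat{0} = (\varnothing \leq \dotsb \leq \varnothing)$, and, for each $i = 1, \dotsc, k$, the two chains $a_i$ and $b_i$ whose last $i$ entries equal $\{1\}$, respectively $\{1'\}$, and whose first $k - i$ entries equal $\varnothing$. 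By \propref{prop:graded} we have $\rho(\hat{0}) = 0$ and $\rho(a_i) = \rho(b_i) = i$, so the levels $I_1[k]_0, I_1[k]_1, \dotsc, I_1[k]_k$ have sizes $1, 2, \dotsc, 2$ respectively ($k$ levels of size $2$).

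Rank-log concavity is then immediate: writing $c_i = |I_1[k]_i|$, we have $c_1^2 = 4 \geq 2 = c_0 c_2$ and $c_i^2 = 4 \geq 4 = c_{i-1} c_{i+1}$ for $2 \leq i \leq k-1$, the condition being vacuous when $k \leq 1$. For normality I would first record the cover relations of $I_1[k]$: these are $\hat{0} \lessdot a_1$, $\hat{0} \lessdot b_1$, and $a_i \lessdot a_{i+1}$, $b_i \lessdot b_{i+1}$ for $1 \leq i \leq k-1$, and nothing else. Indeed, from $a_i$ the only way to raise the rank by one is to replace its last $\varnothing$ by $\{1\}$ (replacing it by $\{1'\}$ would place $\{1'\}$ immediately before a $\{1\}$, violating the chain condition, while the $\{1\}$ entries are already maximal in $I_1$), and symmetrically for $b_i$; moreover $a_i$ and $b_j$ are incomparable for all $i, j \geq 1$, since their $k$th entries $\{1\}$ and $\{1'\}$ are incomparable. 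Consequently $\nabla(\{\hat{0}\}) = \{a_1, b_1\}$, $\nabla(\{a_i\}) = \{a_{i+1}\}$, $\nabla(\{b_i\}) = \{b_{i+1}\}$, and $\nabla$ of a full two-element level is the next full two-element level. Checking the normality inequality $|A|/|I_1[k]_i| \leq |\nabla(A)|/|I_1[k]_{i+1}|$ over all $A \subseteq I_1[k]_i$ and $0 \leq i \leq k-1$ then reduces to the trivial case $A = \varnothing$, the case $A = \{\hat{0}\}$ with $1/1 \leq 2/2$, and on a two-element level the cases $1/2 \leq 1/2$ and $2/2 \leq 2/2$.

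The only real content lies in the first step, so the main --- and quite mild --- obstacle is the careful enumeration of the chains and cover relations of $I_1[k]$: once one sees that $I_1[k]$ consists of two $k$-element chains sharing their common minimum $\hat{0}$, both conclusions drop out by inspection. Note that, unlike for $B_n$, one cannot shortcut this using \thmref{thm:product}: the poset $I_1$ has only three elements, so it does not split as a nontrivial direct product, which is why the explicit description above is needed.
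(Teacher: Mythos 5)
Your proof is correct and takes essentially the same route as the paper: identify $I_1[k]$ as two $k$-element chains joined at a common minimum, so the level sizes are $1,2,2,\dotsc,2$, and read off both properties. The paper merely exhibits $I_1[3]$ and asserts the general pattern ``by inspection,'' whereas you carry out the enumeration of chains, cover relations, and the normality inequalities explicitly for general $k$ --- a more complete write-up of the same argument.
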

\begin{proof}
  As in the proof of \lemref{lem:totalchain}, we will give $I_1[3]$ as an
  example and let the general case follow similarly (as before, $01'1'$ is
  shorthand for $0 \leq 1' \leq 1'$, etc.):
  \begin{center}
    \begin{tikzpicture}
      \node[elem] (111) at (-0.5, 3) {$111$};
      \node[elem] (1'1'1') at (0.5, 3) {$1'1'1'$};

      \node[elem] (011) at (-0.5, 2) {$011$};
      \node[elem] (01'1') at (0.5, 2) {$01'1'$};

      \node[elem] (001) at (-0.5, 1) {$001$};
      \node[elem] (001') at (0.5, 1) {$001'$};

      \node[elem] (000) at (0, 0) {$000$};

      \path[draw]
      (111) edge node {} (011)
      (1'1'1') edge node {} (01'1')
      (011) edge node {} (001)
      (01'1') edge node {} (001')
      (001) edge node {} (000)
      (001') edge node {} (000);
    \end{tikzpicture}
  \end{center}
  For higher values of $k$ in $I_1[k]$, it can be seen that we simply add
  another level of two elements on top of the Hasse diagram for $I_1[k-1]$.
  Thus, normality and rank-log unimodality follow by inspection.
\end{proof}

The main theorem of this section, along with its proof, is analogous
to~\thmref{thm:boolean}.

\begin{thm}
  \label{thm:ifn}
  For $n, k \in \N$, $I_n[k]$ is strongly Sperner and rank-log concave.
\end{thm}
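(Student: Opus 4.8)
The plan is to follow exactly the template established in the proof of \thmref{thm:boolean}, substituting the isotropic-flag ingredients for the Boolean ones. First I would invoke \lemref{lem:iffact} to write $I_n \cong I_1^n$, and then apply \corref{cor:dirprod} with $P = I_1$ to obtain
\[
  I_n[k] \cong (I_1^n)[k] \cong (I_1[k])^n.
\]
This reduces the problem to establishing the two hypotheses of \thmref{thm:product} — normality and rank-log concavity — for the single factor $I_1[k]$, together with the fact that normality is inherited by direct products and implies the strong Sperner property.

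Next I would quote \lemref{lem:ifprop}, which already records that $I_1[k]$ is normal and rank-log concave for every $k \in \N$. With the base case in hand, an induction on $n$ using \thmref{thm:product} shows that $(I_1[k])^n$ is normal and rank-log concave for all $n$: the inductive step takes $P = (I_1[k])^{n-1}$ (normal and rank-log concave by the inductive hypothesis) and $Q = I_1[k]$ (normal and rank-log concave by \lemref{lem:ifprop}), and concludes that $P \times Q \cong (I_1[k])^n$ is normal and rank-log concave. Transporting this back along the isomorphism $I_n[k] \cong (I_1[k])^n$ gives that $I_n[k]$ itself is normal and rank-log concave.

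Finally I would translate these two conclusions into the statement as phrased. Rank-log concavity is immediate. For the strong Sperner property, I would cite \cite[Corollary~4.5.3]{Engel}, which says normality implies the strong Sperner property; since $I_n[k]$ is normal, it is strongly Sperner. (As a remark one can also note rank-unimodality and the ordinary Sperner property follow, via \cite[Proposition~5.11]{Stanley} and the case $k=1$, though these are not needed for the statement.) There is essentially no obstacle here: every nontrivial input — the factorization, the compatibility of chain posets with direct products, the product theorem, and the properties of the atomic factor $I_1[k]$ — has already been assembled in the preceding lemmas, so the proof is a short assembly. If anything, the only point requiring a word of care is making the induction on $n$ in \thmref{thm:product} explicit, exactly as was done for $T_k^n$ in \thmref{thm:boolean}.
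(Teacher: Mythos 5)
Your proposal is correct and follows essentially the same route as the paper: reduce via \lemref{lem:iffact} and \corref{cor:dirprod} to $(I_1[k])^n$, invoke \lemref{lem:ifprop} for the single factor, and induct on $n$ using \thmref{thm:product}. Your version is slightly more explicit about the induction and about why normality yields the strong Sperner property, but the substance is identical.
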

\begin{proof}
  By \lemref{lem:iffact} and \corref{cor:dirprod},
  \[
    I_n[k] \cong (I_1^n)[k] \cong {(I_1[k])}^n.
  \]
  Since, by \lemref{lem:ifprop}, $I_1[k]$ is normal and rank-log concave,
  \thmref{thm:product} implies that ${(I_1[k])}^n$ is also normal and rank-log
  concave.
\end{proof}

\begin{rem}
  The definition of $I_n$ generalizes naturally to higher numbers of sets
  $\llb n \rrb$, $\llb n' \rrb$, $\llb n'' \rrb$, etc.\ disallowing any
  pairwise ``matches'' between the sets (i.e.\ for any $i$, we cannot have the
  pairs $\{i, i'\}$, $\{i, i''\}$, $\{i', i''\}$). The $k$-chain posets of
  these generalizations are also normal and rank-log concave, using the same
  method of proof as for $I_n$.
\end{rem}

\end{document}